\newtheorem{theorem}{Theorem}
\newtheorem{lem}[theorem]{Lemma}
\newtheorem*{observation*}{Observation}
\theoremstyle{remark}
\theoremstyle{definition}
\newtheorem{claim}{Claim}
\numberwithin{equation}{section}
\newcommand{\abs}[1]{\left\lvert#1\right\rvert}
\newcommand{\floor}[1]{\left\lfloor#1\right\rfloor}
\newcommand{\ceil}[1]{\left\lceil#1\right\rceil}
\DeclareMathOperator{\poly}{poly}
\DeclareMathOperator{\PHC}{poly_{HC}}
\DeclareMathOperator{\PF}{poly_{F_1}}
\DeclareMathOperator{\P2F}{poly_{F_2}}
\newcommand{\sH}{\ensuremath{\mathcal{H}}}
\newcommand{\rF}{{\rm F}}
\newcommand{\rHC}{{\rm {HC}}}
\begin{document}
%
 \title{Polychromatic colorings of complete graphs with respect to $1$-,  $2$-factors and Hamiltonian cycles}
 \author{
 Maria Axenovich\thanks{Karlsruhe Institute of Technology, Karlsruhe, Germany, \texttt{maria.aksenovich@kit.edu}.} 
 \and John Goldwasser\thanks{ West Virginia University, Morgantown, WV, USA, \texttt{jgoldwas@math.wvu.edu}.}
 \and Ryan Hansen\thanks{ West Virginia University, Morgantown, WV, USA, \texttt{rhansen@mail.wvu.edu}}
 \and Bernard Lidick\'y\thanks{Iowa State University, Ames, IA, USA, \texttt{lidicky@iastate,edu}. Supported by NSF grant DMS-1600390.}
 \and Ryan R. Martin \thanks{Iowa State University, Ames, IA, USA, \texttt{rymartin@iastate.edu}. Research supported in part by Simons Foundation Collaboration Grant (\#353292, to R.R. Martin).}
 \and David Offner\thanks{Westminster College, New Wilmington, PA, USA, \texttt{offnerde@westminster.edu}.}
 \and John Talbot\thanks{University College London, London, UK, \texttt{j.talbot@ucl.ac.uk}.}
 \and Michael Young\thanks{Iowa State University, Ames, IA, USA, \texttt{myoung@iastate.edu}.}
 }
\date{\today}

\maketitle

\begin{abstract}
If $G$ is a graph and $\mathcal{H}$ is a set of subgraphs of $G$, then  an edge-coloring of $G$ is called   $\mathcal{H}$-polychromatic if every graph from $\mathcal{H}$ gets all colors present in $G$ on its edges. The $\mathcal{H}$-polychromatic number of $G$, denoted $\poly_\mathcal{H} (G)$, is the largest number of colors in an $\sH$-polychromatic coloring. In this paper, $\poly_\mathcal{H} (G)$ is determined exactly when $G$ is a complete graph and $\mathcal{H}$ is the family of all $1$-factors.
In addition $\text{poly}_\mathcal{H}(G)$ is found up to an additive constant term  when $G$ is a complete graph and $\mathcal{H}$ is the family of all $2$-factors, or the family of all Hamiltonian cycles.
\end{abstract}

\section{Introduction} 
\label{sec:introduction}
 
If $G$ is a graph and $\sH$ is a set of subgraphs of $G$, we say that an edge-coloring of $G$   is  $\sH$-{\it polychromatic} if  every graph from  $\sH$ gets all  colors present in $G$  on its edges.  The \emph{$\sH$-polychromatic number of $G$}, denoted $\poly_\sH (G)$, is the largest number of colors in an $\sH$-polychromatic coloring.  
If an $\sH$-polychromatic coloring of $G$ uses $\poly_\sH (G)$ colors,  it is called an {\it optimal} $\sH$-polychromatic coloring of $G$.

\subsection{Background}

Let $Q_n$ denote the hypercube of dimension $n$. 
Let $G=Q_n$ and $\sH$ be the family of all subgraphs of $G$ isomorphic to  $Q_d$.
If $d$ is fixed and $n$ is large, then Alon, Krech, and Szab\'o~\cite{Alon:2007cd} showed that $\lfloor \frac{(d+1)^2}{4} \rfloor \le \poly_\mathcal{H}(Q_n) \le \binom{d+1}{2}$.  
Offner~\cite{Offner:2008vb} proved that the lower bound is tight for all sufficiently large values of $n$.
Bialostocki \cite{Bialostocki:1983wo} treated the special case when $d=2$ and $n \geq 2$. 
Goldwasser \emph{et al.}  \cite{group_paper}  considered the case where $\sH$ is  the  family of all subgraphs of $Q_n$ isomorphic to a $Q_d$  minus an edge or  a $Q_d$ minus a vertex.

If $T$ is a tree and $\sH$ is the set of all paths of length at least $r$, then $\poly_{\sH}(T) = \ceil{r/2}$, as was shown by Bollob\'as  \emph{et al.} \cite{BPRS}.
When $G=K_n$ and $\sH$ is the set of all $r$-vertex cliques, $\poly_{\sH}(G)$ was considered by Erd\H{o}s and Gy\'arf\'as \cite{EG,  G} with the respective colorings called balanced.
When $G$ is an arbitrary multigraph of minimum degree $d$, and $\sH$ is the set of all stars with center $v$ and leaves $N(v)$, $v\in V(G)$, then it was shown by Alon \emph{et al.} \cite{ABBBCSSZ}, that $\poly_{\sH} (G) \geq \lfloor (3d+1)/4 \rfloor.$  Goddard and Henning \cite{GH} considered vertex-colorings of graphs such that each open neighborhood contains a vertex of every color used in $G$.

Polychromatic colorings were also investigated for vertex-colored hypergraphs. These colorings are essential tools in studying covering problems which are of fundamental importance in general graph and hypergraph settings, especially in geometric hypergraphs,  and they exhibit connections to VC-dimension, see \cite{AKV,ABBBCSSZ, BPRS, PT}.
 
\subsection{Main Results}

In this paper, we consider the case where  $G$ is a complete graph and  $\sH$ is a family of spanning subgraphs. 
Let $\rF_1= \rF_1(n)$ be the family of all $1$-factors of $K_n$,  $\rF_2=\rF_2(n)$ be the family of all $2$-factors of $K_n$ and $\rHC=\rHC(n)$ be the family of all Hamiltonian cycles of $K_n$. 
Our main results are as follows:

\begin{theorem}\label{theorem:P1F}
	If $n$ is an even positive integer, then $\PF(K_n)=\floor{\log_2 n}$.
\end{theorem}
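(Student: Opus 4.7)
My plan is to prove the upper and lower bounds separately. For the lower bound, I would construct an explicit coloring using $k := \lfloor \log_2 n \rfloor$ colors as follows: label the vertices $1, 2, \ldots, n$ and set $\ell(v) := \lfloor \log_2 v \rfloor$, so the level sets $L_i := \{v : \ell(v) = i\}$ satisfy $|L_i| = 2^i$ for $0 \le i \le k-1$ and $|L_k| = n - 2^k + 1$. Color each edge $uv$ by $\min(\ell(u), \ell(v))$ whenever that value is at most $k-1$, and color the remaining edges (those inside $L_k$, if any) by $0$. This uses exactly the $k$ colors $\{0, 1, \ldots, k-1\}$. For polychromaticity, fix a color $i \in \{0, \ldots, k-1\}$ and any $1$-factor $M$: since $|L_0 \cup \cdots \cup L_{i-1}| = 2^i - 1 < 2^i = |L_i|$, pigeonhole forces some $v \in L_i$ to be matched by $M$ to a vertex $w$ with $\ell(w) \ge i$, so the edge $vw$ has color $\min(\ell(v), \ell(w)) = i$.

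For the upper bound, I would prove by induction on $t$ that any $t$-polychromatic coloring of $K_n$ (with $n$ even) forces $n \ge 2^t$. The base case is trivial. For the inductive step, given a $t$-polychromatic coloring and any color $i$, let $M^*$ be a maximum matching of the color class $E_i$, with $\mu_i := |M^*|$, and set $V' := V \setminus V(M^*)$, of even cardinality $n - 2\mu_i$. For any $1$-factor $M'$ of $K_{V'}$, $M' \cup M^*$ is a $1$-factor of $K_n$ and so, by hypothesis, uses every color; since every edge of $M^*$ has color $i$, the matching $M'$ must use every color in $\{1, \ldots, t\} \setminus \{i\}$. Merging color $i$ into some other color on $K_{V'}$ produces a $(t-1)$-polychromatic coloring, so by induction $n - 2\mu_i \ge 2^{t-1}$, i.e., $\mu_i \le (n - 2^{t-1})/2$.

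The main obstacle is to extract $n \ge 2^t$ from this per-color inequality: one needs to show that some color has $\mu_i \ge n/4$, for then $n/2 \ge n - 2\mu_i \ge 2^{t-1}$ gives $n \ge 2^t$. This is nontrivial because a single color class can have matching number as small as $1$ (for instance, the star $K_{1,n-1}$ is a blocker). For $t = 2$ the bound $\max_i \mu_i \ge \lceil n/4 \rceil$ follows from a simple vertex-cover argument: if both color classes had vertex covers of size less than $n/2$, then some pair of vertices outside both covers would be joined by an edge belonging to neither cover, a contradiction. For general $t$ the approach must be more delicate: either by combining the Tutte--Berge structure of the blockers $E_i$ across all colors to locate a large monochromatic matching, or by iterating the matching-removal lemma with carefully chosen colors at each stage so that the cumulative number of peeled-off vertices reaches at least $n/2$. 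Either route closes the induction and yields $\PF(K_n) \le \lfloor \log_2 n \rfloor$.
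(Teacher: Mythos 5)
Your lower bound is correct and is essentially the paper's construction $\varphi_{\rF_1}$: an ordered coloring whose inherited classes double in size, with polychromaticity coming from the same pigeonhole count $\abs{L_0\cup\cdots\cup L_{i-1}}=2^i-1<2^i=\abs{L_i}$. No issues there.

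The upper bound, however, has a genuine gap, and you have located it yourself. The inductive frame is sound: peeling off a maximum matching $M^*$ of color $i$ and noting that every $1$-factor of $K_{V'}$ must carry all colors other than $i$ does give $n-2\mu_i\geq 2^{t-1}$. But this is only an \emph{upper} bound on each $\mu_i$; to conclude $n\geq 2^t$ you need the complementary claim that \emph{some} color satisfies $\mu_i\geq n/4$, and you prove this only for $t=2$. For $t\geq 3$ the vertex-cover argument gives nothing ($t$ covers of size less than $n/2$ need not exhaust $V$), and the two alternatives you gesture at (``combine the Tutte--Berge structure of the blockers'' or ``iterate the peeling until $n/2$ vertices are removed'') are not arguments but proposals: neither is carried out, and it is not evident that either succeeds. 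General Ramsey-type results only guarantee a monochromatic matching of size about $n/(t+1)$, which falls below $n/4$ once $t\geq 4$, so any proof of the $n/4$ claim must exploit the polychromatic hypothesis in an essential structural way --- and that structural work is precisely where the difficulty of the theorem lives. The paper takes a different route to the same end: an exchange/twist argument on a vertex of maximum monochromatic degree shows that an optimal $\rF_1$-polychromatic coloring may be taken to be \emph{ordered}; Lemma~\ref{orderedF1lemma} then shows each inherited class $M_t$ must constitute a strict majority of some prefix $v_1,\ldots,v_{j_t}$, which forces $\abs{M_t}\geq 2^{t-1}$, hence $n\geq 2^k-1$ and, by parity, $n\geq 2^k$. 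As written, your upper-bound argument is incomplete at its central step.
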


\begin{theorem}\label{theoremeasy}
        There exists a constant $c$ such that $\floor{ \log_2 2(n+1)} \leq \P2F(K_n)   \leq \PHC(K_n) \leq \floor{\log_2 n}+c$. Moreover, $\floor{ \log_2 \frac{8(n-1)}{3}} \leq \PHC(K_n)$.
\end{theorem}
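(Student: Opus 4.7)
The middle inequality $\P2F(K_n)\le \PHC(K_n)$ is immediate, since every Hamiltonian cycle is a $2$-factor and hence any $F_2$-polychromatic coloring is automatically $HC$-polychromatic. The real content is the two lower bound constructions and the upper bound on $\PHC(K_n)$.

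For the upper bound $\PHC(K_n)\le\floor{\log_2 n}+c$, the plan is to reduce to Theorem~\ref{theorem:P1F}. Suppose $K_n$ admits an $HC$-polychromatic coloring with color classes $E_1,\dots,E_k$. Since $K_n\setminus E_c$ is non-Hamiltonian for each $c$, a Hamiltonicity obstruction (Chv\'atal--Erd\H{o}s, toughness, or a degree-sequence analysis \`a la Ore) supplies a structural witness inside each $E_c$: typically a vertex incident to $\Omega(n)$ color-$c$ edges, or a cut whose complement contains a suitably large independent set. I would then argue that after discarding a constant number of exceptional colors and possibly deleting at most one vertex to correct parity, the remaining color classes each also certify the absence of a \emph{perfect matching} in an induced even-order complete subgraph, producing an $F_1$-polychromatic coloring of $K_{n'}$ with $n'$ even and $n'\ge n-O(1)$ using $k-O(1)$ colors. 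Applying Theorem~\ref{theorem:P1F} to $K_{n'}$ then yields $k\le\floor{\log_2 n}+c$.

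For the lower bound $\floor{\log_2 2(n+1)}\le \P2F(K_n)$, I would construct an explicit coloring following the hierarchical, bit-pattern-based labeling that underlies the tight construction for Theorem~\ref{theorem:P1F}. Each color class $E_c$ is designed so that $K_n\setminus E_c$ has an obstruction to admitting a $2$-factor -- a vertex of degree at most $1$, or more generally a set $S$ whose removal produces too many components to be covered by disjoint cycles. Because the $2$-factor obstruction is strictly weaker than the $1$-factor one, the color classes can be slightly smaller than in the $F_1$ setting, which lets one squeeze in an additional color and yields $\floor{\log_2 2(n+1)}$. The improved bound $\floor{\log_2 \tfrac{8(n-1)}{3}}\le\PHC(K_n)$ comes from refining this construction to exploit that many $2$-factors (for instance, disjoint unions of triangles) are not Hamiltonian, so color classes only need to cover Hamiltonian cycles; this permits a finer partition and gains roughly $\log_2(8/3)\approx 1.4$ further colors over the $F_1$ baseline.

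The main obstacle is the upper bound: the non-Hamiltonicity witnesses for different color classes can be structurally heterogeneous -- some colors may be star-concentrated at a single vertex while others form bipartite-cut-type obstructions -- and organizing the reduction to an $F_1$-polychromatic coloring on an induced even-order clique while losing only an absolute constant number of colors (independent of $n$) is the technical heart of the argument and determines the additive constant $c$ appearing in the theorem.
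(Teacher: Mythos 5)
Your middle inequality $\P2F(K_n)\leq\PHC(K_n)$ is fine and matches the paper. The genuine gap is the upper bound. You propose to convert an $\rHC$-polychromatic coloring into an $\rF_1$-polychromatic coloring of a slightly smaller even-order clique, losing only $O(1)$ colors, and then invoke Theorem~\ref{theorem:P1F}. But the key step --- that ``after discarding a constant number of exceptional colors \dots\ the remaining color classes each also certify the absence of a perfect matching'' --- is precisely the statement that would need proof, and you give no argument for it. It is not even clearly true as formulated: a color class can obstruct Hamiltonicity by containing a complete bipartite cut between two large vertex sets (which does not obstruct perfect matchings), and since such a class needs only about $n$ edges, a disjointness count bounds the number of such ``bad'' classes by roughly $n/2$, not by a constant; nor is it clear that restricting to $K_{n-1}$ to fix parity preserves any polychromatic property. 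The paper does not reduce to Theorem~\ref{theorem:P1F} at all. Instead it proves, via a lengthy twist/exchange argument (Lemma~\ref{structure}), that there is an optimal coloring that is \emph{combed} --- ordered except for at most three unitary vertices --- then shows (Lemma~\ref{orderedPClemma}) that every inherited color class either contains a unitary vertex or has a prefix $v_1,\dots,v_j$ of which it occupies at least half, and finally runs the doubling/counting argument directly on the class sizes. That structure lemma is the technical heart of the whole theorem, and your proposal leaves the corresponding work entirely open, as you acknowledge in your final paragraph.

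The lower bounds are sketched in the right spirit: hierarchical classes of geometrically growing size, a few singleton classes whose (unitary) vertices force their color into every member of $\sH$, and the factor $8/3$ for $\rHC$ exploiting that a union of cycles cannot be a proper subgraph of a Hamiltonian cycle. But you give no explicit coloring and no verification that every $2$-factor or Hamiltonian cycle meets every class, so these parts also remain to be filled in; the paper's constructions $\varphi_{\rF_2}$ and $\varphi_{\rHC}$ in Sections~\ref{subsec:_k_2f_polychromatic_coloring} and~\ref{subsec:_k_hc_polychromatic_coloring} supply exactly that, with the verification resting on the inequality $\abs{M_t}\geq\sum_{i<t}\abs{M_i}$ together with a degree count in the bipartite graph between $M_t$ and the earlier classes.
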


It is claimed  in a follow-up paper~\cite{followup}, that in fact $\P2F(K_n)=\floor{\log_2 2(n+1)}$ and $\PHC(K_n)=\floor{ \log_2 \frac{8(n-1)}{3}}$ for $n \geq 3$.
However, the arguments there include more case analysis and greater detail than what is required for the small additive constant given in Theorem~\ref{theoremeasy}.

The paper is structured as follows. 
We start with basic definitions in Section~\ref{sec:def}.
In Section~\ref{Constructions}, we give constructions of polychromatic colorings, which provide the lower bounds
for Theorems~\ref{theorem:P1F} and \ref{theoremeasy}.
In Section~\ref{sec:T1}, we prove Theorem~\ref{theorem:P1F}.
Section~\ref{sec:T2} contains the proof of Theorem~\ref{theoremeasy}.

\section{Definitions}\label{sec:def}

Let the vertices of $K_n$ be denoted by $v_1,v_2,\ldots,v_n$. 
An edge-coloring  $\varphi$ is \emph{ordered at $v_i$} for $i \in [n]$ if there exists a color $a$, called the \emph{main color at $v_i$}, such that $\varphi(v_i v_j) = a$ for all $j \in \{i+1,\ldots,n\}$.
Notice that $v_{n-1}$ and $v_n$ are ordered with respect to any coloring. We define the main color of $v_n$ to be
the same as for $v_{n-1}$.
A vertex $v_i$ is \emph{unitary} if there are colors $a \neq b$ such that $v_i$ is incident with $n-2$ edges colored $a$ 
and one edge $v_iv_j$ colored $b$, where $v_j$ is unitary with $n-2$ incident edges colored $b$.
For $v_i$ unitary, we also call $a$ the \emph{main color}.

An edge-coloring is \emph{ordered} if all vertices are ordered
 with respect to some ordering of vertices.
See Figure~\ref{fig:F1} for an example of an ordered coloring.
We call an edge-coloring \emph{combed} if each vertex is  either ordered or unitary.
It is not difficult to show that if there is at least one unitary vertex in a combed coloring then either the first three vertices (and no others) are unitary with different main colors, as in Figure~\ref{fig:F2}, or the first four vertices (and no others) are unitary with two of them with one main color, and two with another.

Let $\varphi$ be an ordered or combed coloring.  The \textit{inherited coloring} is the vertex-coloring $\varphi'$ obtained by coloring each vertex with its main color.
Its \textit{inherited color class $M_i$ of color $i$} is the set of all vertices $v$ with $\varphi'(v) = i$.
 Let $M_t(j)= M_t \cap \{v_1, v_2, \ldots, v_j\}$.
In this paper, we shall always think of  the ordered vertices as arranged on a horizontal line with $v_i$ to the left of $v_j$ if $i<j$. 
We say that an edge $v_iv_j$, $i<j$,  goes from $v_i$ to the right and from $v_j$ to the left.  
If $\varphi$ is an edge-coloring of a graph $G$, the {\it maximum monochromatic degree}  of $G$ is the largest integer $d$ such that some vertex of $G$ is incident to $d$ edges of the same color.  
We say such a vertex is a \emph{max-vertex}. 
If $X$ is a subset of $V(K_n)$, we say  that the edge-coloring $\varphi$ of $K_n$  is 
	\begin{itemize}
		\item {\it $X$-constant} if  for any $v\in X$,  $\varphi(v u)=\varphi(v w)$ for all  $u, w\in V\setminus X$,
		\item {\it $X$-ordered} if there is an ordering of the vertices such that $X = \{v_1,\ldots,v_m\}$ for some integer $m$ and $\varphi$ is ordered on vertices in $X$.
	\end{itemize}
Notice if a coloring $\varphi$ is $X$-ordered, it is also $X$-constant.

 \section{Constructions of Polychromatic Colorings}\label{Constructions}

We construct three edge-colorings of $K_n$, and show that they are polychromatic for $\rF_1$, $\rF_2$, and $\rHC$, respectively.

\subsection{$\rF_1$-polychromatic Coloring  $\varphi_{\rF_1}$} 
\label{subsec:_k_1f_polychromatic}

Let $n\geq 2$ be even, and let $k$ be the largest positive integer  such that $2^{k} \leq  n$, i.e., $k= \lfloor \log_2 n \rfloor$.  
Let $\varphi'$ be a vertex-coloring of $K_n$ with vertex set $\{v_1,\ldots,v_n\}$ and colors $1, \ldots, k$, where 
for each $i \in [k]$, $M_i$ is the color class of color $i$.
Moreover, for any $1\leq i<j\leq k$, every vertex in $M_i$ precedes every vertex in $M_j$,  and $ |M_t| =2^{t-1}$  for  $t=1, \ldots, k-1$. 
Hence the color  classes  $1, 2, \ldots, k$ have sizes $1, 2, 4, \ldots, 2^{k-2},n-2^{k-1}+1$, respectively.  Let $\varphi_{\rF_1}$ be the ordered coloring for which $\varphi'$ is the inherited coloring.

\tikzset{
vtx/.style={inner sep=1.7pt, outer sep=0pt, circle,fill=black},
edge1/.style={black,ultra thick,blue,line width=1.7pt},
edge2/.style={black,dotted,ultra thick,red},
edge3/.style={black,line width=0.7pt},
edge4/.style={black, densely dotted},
edge5/.style={black,dashed,thick},
edge1to/.style={edge1,-latex},
edge2to/.style={edge2,-latex},
edge3to/.style={edge3,-latex},
edge2match/.style={edge2,latex-latex},
edge3match/.style={edge3,latex-latex},
}

\begin{figure}
\begin{center}
\begin{tikzpicture}
\foreach \x in {1,2,...,10}{
\draw (\x,0) node[vtx,label=below:$v_{\x}$] {};
}
\draw (11,0) node{$\cdots$};
\draw (12,0) node{$\cdots$};
\draw (13,-0.2) node{$\cdots$};
\foreach \x/\y/\t in {1/1/1, 2/3/2, 4/7/3,  8/12/4}{
\draw (\x-0.3,-0.6) rectangle (\y+0.3,0.2)
(0.5*\x+0.5*\y, -1) node {$M_\t$}
;
\foreach \v in {\x,...,\y}{
\foreach \u in {1,...,5 }{
\draw[thick,edge\t] (\v,0) to[out=90,in=180] (0.5+\v+0.1*\u,\u*0.4);
}
}
}
\end{tikzpicture}
\end{center}
\caption{$\rF_1$-polychromatic coloring  $\varphi_{\rF_1}$.}\label{fig:F1}
\end{figure}
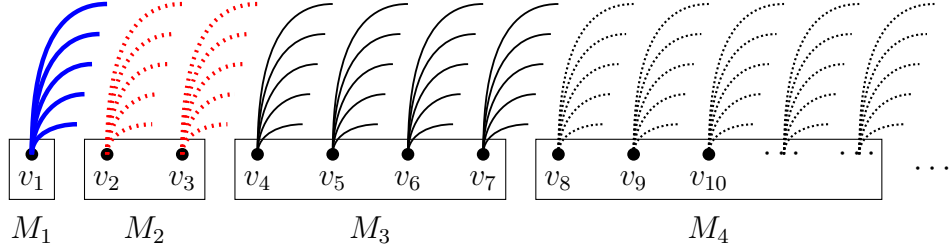

Consider an arbitrary $1$-factor $F$ of $K_n$ and $t\in [k]$.  Consider the set $F_t$ of all edges of $F$  with at least one endpoint in $M_t$.  Since  $|M_1|+\cdots+|M_{i}|= 2^{i} -1$ and    $|M_k| = n- |M_1|-\cdots - |M_{k-1}| \geq 2^{k}  -2^{k-1}+1= 2^{k-1}+1$, we have
 $\sum_{i< t}  |M_i| < |M_t|$ for any $t\in [k]$. Thus  at least one edge of $F_t$ joins a vertex from  $M_t$  to a vertex to the right, so this edge is of color $t$. Therefore $F$ has edges of each color.  Hence $\varphi_{\rF_1}$ is   $\rF_1$-polychromatic and it uses $\floor{\log_2 n}$ colors.

\subsection{$\rF_2$-polychromatic Coloring $\varphi_{\rF_2}$} 
\label{subsec:_k_2f_polychromatic_coloring}

Let $k$ be the largest positive integer such that $n\geq 2^{k-1} -1$, i.e., $k = 1+ \lfloor \log_2(n+1) \rfloor$.
Let $\varphi'$ be a vertex-coloring of $K_n$ with vertex set $\{v_1,\ldots,v_n\}$ and colors $1, \ldots, k$, where 
for each $i \in [k]$, $M_i$ is the color class of color $i$.
Moreover, for any $1\leq i<j\leq k$, every vertex in $M_i$ precedes every vertex in $M_j$,  and $ |M_t| =2^{t-2}$  for  $t=4, \ldots, k-1$, and  $|M_1|=|M_2|=|M_3|=1$.
Hence the color  classes  $1, 2, \ldots, k-1,k$ have sizes $1, 1, 1, 4, 8,  \ldots, 2^{k-3},n-2^{k-2}+1$, respectively.
 Let $\varphi_{\rF_2}$ be obtained by taking the ordered coloring for which $\varphi'$ is the inherited coloring
and then recoloring the edge $v_1v_3$ from color $1$ to color $3$.
 See Figure~\ref{fig:F2}.

\begin{figure}
\begin{center}
\begin{tikzpicture}
\foreach \x in {1,2,...,10}{
\draw (\x,0) node[vtx,label=below:$v_{\x}$] {};
}
\draw (11,0) node{$\cdots$};
\draw (12,0) node{$\cdots$};
\draw (13,-0.2) node{$\cdots$};
\foreach \x/\y/\t in {1/1/1, 2/2/2, 3/3/3,  4/7/4, 8/12/5}{
\draw (\x-0.3,-0.6) rectangle (\y+0.3,0.2)
(0.5*\x+0.5*\y, -1) node {$M_\t$}
;
\foreach \v in {\x,...,\y}{
\foreach \u in {1,...,5 }{
\draw[thick,edge\t] (\v,0) to[out=90,in=180] (0.5+\v+0.1*\u,\u*0.4);
}
}
}
\draw[thick,edge3] (1,0) to[out=100,in=100,looseness=4] (3,0);
\end{tikzpicture}
\end{center}
\caption{$\rF_2$-polychromatic coloring  $\varphi_{\rF_2}$.}\label{fig:F2}
\end{figure}
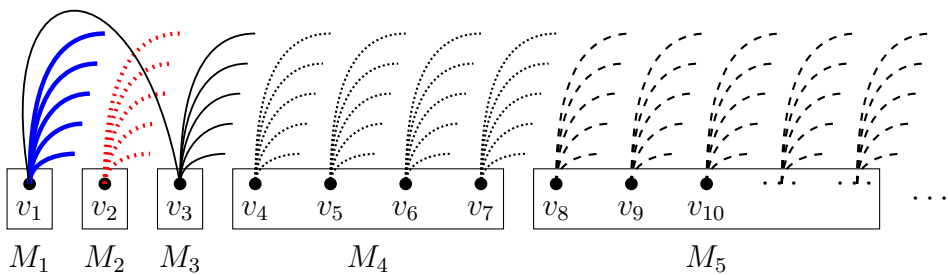

Observe that the inherited color classes $M_1$, $M_2$, and $M_3$ contain unitary vertices.  Moreover, $|M_1| + \cdots +|M_t| = 2^{t-1} -1$  for $3 \leq t \leq k-1$, 
and $|M_k| = n - |M_1|-\cdots -|M_{k-1}| \geq 2^{k-1} -1 - 2^{k-2} +1 = 2^{k-2}$. So, 
$\abs{M_t}>\sum_{i<t} \abs{M_i}$ for any $t \geq  4$.
Consider an arbitrary $2$-factor $F$ of $K_n$ and $t\in [k]$.
For $i \le 3$, $v_i$ is a unitary vertex with main color $i$, so $F$ must have edges of colors $1$, $2$, and $3$.
For a color $t\geq 4$, consider the set $F_t$ of edges of $F$ with endpoints in $M_t$. Then  $F_t$ has an edge of color $t$ unless $F_t$ forms a bipartite graph $G_t$ with one part $M_t$ and another $M_t'= \bigcup_{i=1}^{t-1} M_i$.  The degree of each vertex of $G_t$ from $M_t$ is two, and the degree of each vertex of $G_t$ from $M_t'$ is at most two. Thus $|M_t'|\geq |M_t|$, a contradiction.  Thus, $F_t$, and therefore $F$, has at least one edge of color $t$. So,  $\varphi_{\rF_2}$ is  $\rF_2$-polychromatic and it uses $k= \floor{\log_2 2(n+1)}$ colors.

\subsection{$\rHC$-polychromatic Coloring $\varphi_{\rHC}$} 
\label{subsec:_k_hc_polychromatic_coloring}

Let $k$ be the largest positive integer such that $n\geq 3\cdot 2^{k-3} +1$, i.e., $k = 3+ \lfloor \log_2 (n-1)/3 \rfloor$.
Let $\varphi'$ be a vertex-coloring of $K_n$ with vertex set $\{v_1,\ldots,v_n\}$ and colors $1, \ldots, k$, where 
for each $i \in [k]$, $M_i$ is the color class of color $i$.
Moreover, for any $1\leq i<j\leq k$, every vertex in $M_i$ precedes every vertex in $M_j$,  and $ |M_t| =3\cdot 2^{t-4}$  for  $t=4, \ldots, k-1$, and  $|M_1|=|M_2|=|M_3|=1$.
Hence the color  classes  $1, 2, \ldots, k-1,k$ have sizes $1, 1, 1, 3, 6, 12,   \ldots, 3\cdot 2^{k-5},n-3\cdot2^{k-4}$, respectively.
 Let $\varphi_{\rHC}$ be obtained by taking the ordered coloring for which $\varphi'$ is the inherited coloring
and then recoloring the edge $v_1v_3$ from color $1$ to color $3$.
 See Figure~\ref{fig:HC}.

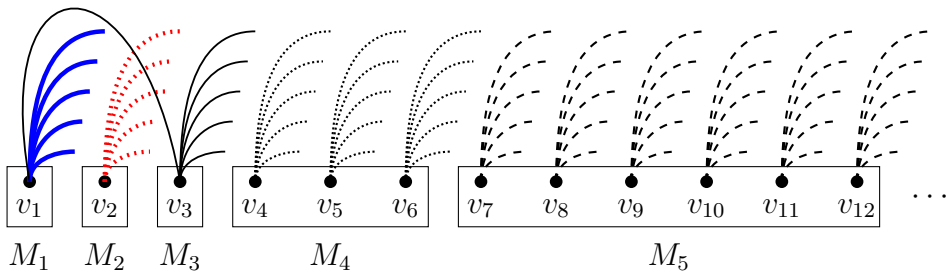
\begin{figure}
\begin{center}
\begin{tikzpicture}
\foreach \x in {1,2,...,12}{
\draw (\x,0) node[vtx,label=below:$v_{\x}$] {};
}
\draw (13,-0.2) node{$\cdots$};
\foreach \x/\y/\t in {1/1/1, 2/2/2, 3/3/3,  4/6/4, 7/12/5}{
\draw (\x-0.3,-0.6) rectangle (\y+0.3,0.2)
(0.5*\x+0.5*\y, -1) node {$M_\t$}
;
\foreach \v in {\x,...,\y}{
\foreach \u in {1,...,5 }{
\draw[thick,edge\t] (\v,0) to[out=90,in=180] (0.5+\v+0.1*\u,\u*0.4);
}
}
}
\draw[thick,edge3] (1,0) to[out=100,in=100,looseness=4] (3,0);
\end{tikzpicture}
\end{center}
\caption{$\rHC$-polychromatic coloring  $\varphi_{\rHC}$.}\label{fig:HC}
\end{figure}

We have that  $|M_1|+ \cdots +|M_t| = 3\cdot 2^{t-3}$ for $3\leq t\leq k-1$.
Moreover, $|M_k| = n- |M_1|-\cdots - |M_{k-1}| \geq 3\cdot 2^{k-3} +1 - 3\cdot 2^{k-4} = 3\cdot 2^{k-4} +1.$
Thus   $\abs{M_k}>\sum_{i<k} \abs{M_i}$  and $\abs{M_t}\geq \sum_{i<t} \abs{M_i}$ for all $t \geq 4$.
Consider an arbitrary Hamiltonian cycle $H$ of $K_n$. 
For $i \le 3$, $v_i$ is a unitary vertex with main color $i$, so $H$ must have edges of colors $1$, $2$, and $3$.
For each color  $t\geq 4$, let $H_t$ be the set of edges of $H$ with at least one endpoint in $M_t$.  
Then $H_t$ has an edge of color $t$ unless $H_t$ forms a bipartite graph $G_t$ with one part $M_t$ and another $M_t'= \bigcup_{i=1}^{t-1} M_i$.  The degree of each vertex of $G_t$ from $M_t$ is two, and the degree of each vertex of $G_t$ from $M_t'$ is at most two. If $4\leq t<k$,  $\abs{M_t}=\abs{M_t'}$, the degree of each vertex of $G_t$ from $M_t'$ is also two.  Hence $G_t$ is a union of cycles, so it could not be a proper subgraph of a Hamiltonian cycle.  If $t=k$, $|M_k|> |M_k'|$, so a bipartite graph $G_t$ could not exist. Thus $H$ has an edge of color $t$ for each $t=1, \ldots, k$,   $\varphi_{\rHC}$ is $\rHC$-polychromatic, and it uses $\floor{\log_2 \frac{8(n-1)}{3}}$ colors.

\section{Proof of Theorem \ref{theorem:P1F}}\label{sec:T1} 

We prove Theorem~\ref{theorem:P1F} by first showing the existence of an optimal edge-coloring
that is ordered. 
Then we use Lemma~\ref{orderedF1lemma} below which states that, for every inherited color class $M_t$,
there exists $j$ such that a majority of  $v_1,\ldots,v_j$ is in $M_t$.
This leads to a counting argument that  gives the upper bound in Theorem~\ref{theorem:P1F}.
For the lower bound we use the coloring $\varphi_{\rF_1}$.

\begin{lem}\label{orderedF1lemma}
	Let $\varphi:E(K_n) \to[k]$, where $n$ is even, be an ordered coloring with inherited color classes $M_1,\ldots,M_k$.
	If the coloring  $\varphi$ is $\rF_1$-polychromatic, then
	 $\forall t\in [k]$   $\exists  j\in[n-1]$ such that  $\abs{M_t(j)} > j/2$.
\end{lem}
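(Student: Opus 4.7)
The plan is to prove the contrapositive, so I fix a color $t$ and assume $|M_t(j)|\le j/2$ for every $j\in[n-1]$. I would then construct a $1$-factor of $K_n$ that uses no edge of color $t$, contradicting the hypothesis that $\varphi$ is $\rF_1$-polychromatic. The key translation is that, since $\varphi$ is ordered, every edge $v_iv_j$ with $i<j$ has color equal to the main color of its smaller-indexed endpoint $v_i$; so an edge has color $t$ precisely when its left endpoint lies in $M_t$. Hence a $1$-factor avoids color $t$ if and only if every vertex of $M_t$ is matched with a vertex of $V(K_n)\setminus M_t$ lying to its left (two $M_t$-vertices cannot be paired, as the edge between them would be colored $t$).

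The setup is a bipartite matching problem. Let $B$ be the bipartite graph on parts $M_t$ and $V(K_n)\setminus M_t$ with an edge from $v_i\in M_t$ to $v_j\in V(K_n)\setminus M_t$ whenever $j<i$; the goal is a matching in $B$ saturating $M_t$, after which I would pair the remaining $V(K_n)\setminus M_t$-vertices arbitrarily --- possible since $n-2|M_t|$ is even when $n$ is even --- to obtain a $1$-factor avoiding color $t$. For any $S\subseteq M_t$ with largest index $i^*$, the neighborhood of $S$ in $B$ is precisely $(V(K_n)\setminus M_t)\cap\{v_1,\ldots,v_{i^*-1}\}$, so Hall's theorem reduces the problem to verifying, for every $i^*$ with $v_{i^*}\in M_t$,
\[|M_t(i^*-1)|\le (i^*-2)/2.\]

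Deriving this bound from the hypothesis is the main obstacle, and it will require splitting on the parity of $i^*$. When $i^*$ is even, the hypothesis at $j=i^*-1\le n-1$ gives $|M_t(i^*-1)|\le (i^*-1)/2$, and integrality of the left side sharpens this to $(i^*-2)/2$. When $i^*$ is odd, the evenness of $n$ forces $i^*\le n-1$, so the hypothesis can be applied at $j=i^*$, yielding $|M_t(i^*)|\le (i^*-1)/2$; subtracting one (since $v_{i^*}\in M_t$ means $|M_t(i^*-1)|=|M_t(i^*)|-1$) produces $|M_t(i^*-1)|\le (i^*-3)/2$. The even parity of $n$ is essential here --- without it, the odd case could collapse at $i^*=n$, where the hypothesis provides no information. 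Once this verification is complete, Hall delivers the matching saturating $M_t$, and combining it with an arbitrary perfect matching on the leftover $V(K_n)\setminus M_t$-vertices produces the required $1$-factor of $K_n$ avoiding color $t$.
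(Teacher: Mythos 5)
Your proof is correct and follows essentially the same route as the paper: both argue the contrapositive by constructing a $1$-factor that avoids color $t$, matching each vertex of $M_t$ to a distinct vertex of $V(K_n)\setminus M_t$ lying to its left and pairing the leftover vertices arbitrarily. The only cosmetic difference is that you certify the existence of such a matching via Hall's condition (with a parity case split), whereas the paper exhibits it explicitly by pairing the $i$-th vertex of $M_t$ with the $i$-th vertex outside $M_t$ and observing that the hypothesis $\abs{M_t(j)}\le j/2$ forces at least $i$ non-$M_t$ vertices to precede the $i$-th vertex of $M_t$.
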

\begin{proof}

Assume there exists $t$ such that for each $j\in[n-1]$, $\abs{M_t(j)} \leq j/2$.
Let $x_1, \ldots, x_m$ be the vertices of $M_t$ in order and let $y_1, \ldots, y_{n-m}$ be the other vertices of $K_n$ in order.
Let $H$ consist of the edges  $y_1x_1,  y_2x_2, \ldots, y_mx_m$ and a perfect matching on $\{y_{m+1}, \ldots, y_{n-m}\}$ (if this set is non-empty).
Since $|M_t(j)| \leq j/2$ for all $j$,  the number of $y$'s that must precede $x_i$ is at least $i$ for each  $i=1, \ldots, m$.
Hence $y_i$ is to the left of $x_i$ for each  $i=1, \ldots, m$.
Therefore all edges in $H$ incident with vertices in $M_t$ go to the left and do not have color $t$. 
The edges of $H$  that are not incident with vertices in $M_t$ are also not of color $t$.
Hence $\varphi$ is not $\rF_1$-polychromatic, a contradiction.
\end{proof}

\begin{proof}[Proof of Theorem \ref{theorem:P1F}]
Let $k=\PF(K_n)$ be the polychromatic number for  $1$-factors in $K_n=(V,E)$.  
Among all   $\rF_1$-polychromatic colorings of $K_n$ with $k$ colors we choose ones that are $X$-ordered for a subset  $X$ (possibly empty) of the largest size, and, of these, choose a coloring $\varphi$ whose restriction to $V\setminus X$ has the largest maximum monochromatic degree. 
Suppose for contradiction that $V \neq X$.

Let $Z = V\setminus X$ and $G$ be the subgraph of $K_n$ induced by $Z$.
Let $v$ be a vertex of maximum monochromatic degree, $d$, in $\varphi$ restricted to $G$, and let 1 be a color for which there are $d$ edges incident with $v$ in $G$ with color 1.
By the maximality of $\abs{X}$, there is a vertex $u$ in $Z$ such that $\varphi(uv)\neq 1$.  
Assume $\varphi(uv)=2$.  
If every $1$-factor containing $uv$ had another edge of color $2$, then the color of $uv$ could be changed to $1$, resulting in an $\rF_1$-polychromatic coloring where $v$ has a larger maximum monochromatic degree in $G$, a contradiction.  
Hence, there is a $1$-factor $F$ in which $uv$ is the only edge with color $2$ in $\varphi$.

Let  $\varphi(vy_i)=1$, $y_i\in Z$, $i=1, \ldots, d$.
For each $i\in[d]$, let $y_i w_i$ be the edge of $F$ containing $y_i$ (perhaps $w_i=y_j$ for some $j\neq i$);  see Figure~\ref{fig:1Fswitch}. 
We can get a different $1$-factor $F_i$ by replacing the edges $uv$ and $y_i w_i$ in  $F$ with edges  $v y_i$ and $u w_i$. 
Since $F_i$ must have an edge of color $2$ and $\varphi(v y_i)=1$, we must have $\varphi(u w_i)=2$ for each $i\in[d]$.

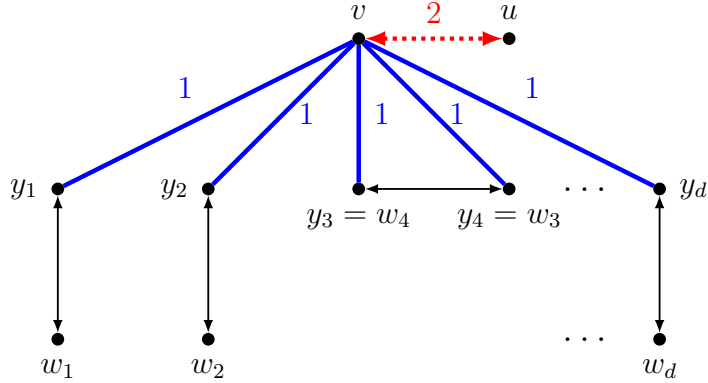
\begin{figure}[htbp]
	\centering
		\begin{tikzpicture}[every text node part/.style={align=center},scale=1,inner sep=1.75mm]
			\node[vtx,label=above:$v$] (v) at (5,4) {};
			\node[vtx,label=above:$u$] (u) at (7,4) {};
			\node[vtx,label=left:{$y_1$}] (y1) at (1,2) {};
			\node[vtx,label=left:{$y_2$}] (y2) at (3,2) {};
			\node[vtx,label=below:{$y_3=w_4$}] (y3) at (5,2) {};
			\node[vtx,label=below:{$y_4=w_3$}] (y4) at (7,2) {};
			\node (dots1) at (8,2) {\large $\ldots$};             
			\node[vtx,label=right:{$y_d$}] (yr) at (9,2) {};
			\node[vtx,label=below:$w_1$] (w1) at (1,0) {};
			\node[vtx,label=below:$w_2$] (w2) at (3,0) {};
			\node (dots2) at (8,0) {\large $\ldots$};
			\node[vtx,label=below:{$w_d$}] (wr) at (9,0) {};
			
			\draw[edge2match] (v) -- node[above] {2} (u);
			
			\draw[edge1] (v) -- node[above left] {1} (y1);
			\draw[edge1] (v) -- node[right] {1} (y2);
			\draw[edge1] (v) -- node[right] {1} (y3);
			\draw[edge1] (v) -- node[right] {1} (y4);
			\draw[edge1] (v) -- node[above right] {1} (yr);
			
			\draw[edge3match] (y3) -- (y4);
			\foreach \i in {1,2,r} {
				\draw[edge3match] (y\i) -- (w\i);
			}
		\end{tikzpicture}
	\caption{Maximum polychromatic degree in an $\rF_1$-polychromatic coloring.}
	\label{fig:1Fswitch}
\end{figure}

If $w_i\in X$ for some $i$ then, since $\varphi$ is $X$-constant, $\varphi(w_iy_i) = \varphi(w_iu) =2$,  so $y_i w_i$ and $uv$ are two edges of color $2$ in $F$, a contradiction. So, $w_i\in Z$ for all $i \in [d]$.
Thus $\varphi(u v)=\varphi(uw_1) = \cdots =\varphi(uw_d) = 2$, and the monochromatic degree of $u$ in $G$ is at least $d+1$, larger than that of   $v$, a contradiction.

We conclude that $X =V$. Hence $\varphi$ is an ordered  $\rF_1$-polychromatic coloring of $K_n$.
By Lemma \ref{orderedF1lemma}, for every $t \in [k]$ there exists $j_t$ such that $|M_t(j_t)| > \frac{j_t}{2}$.
By permuting the colors, we can assume $j_{t_1} < j_{t_2}$ whenever $t_1 < t_2$.
This gives us an ordering of  inherited color classes  $M_1,M_2,\ldots,M_k$. 
Since $|M_1|\ge 1$ and $|M_t(j_t)| > \frac{j_t}{2}$, we can use induction to show $|M_t| \geq |M_t(j_t)| \geq 2^{t-1}$
as follows
\[
|M_t| \geq |M_t(j_t)| > \sum_{1 \leq i < t} |M_i(j_t)| \geq \sum_{1 \leq i < t} |M_i(j_i)|  \geq  \sum_{1 \leq i < t} 2^{i-1} = 2^{t-1}-1.
\]
The sum of the sizes of all inherited color classes is $n$, and we get
\[
n = \sum_{t = 1}^{k}|M_t| \geq \sum_{t = 1}^{k} 2^{t-1} = 2^{k}-1.
\]
Since $n$ is even, $n \geq 2^k$ and $\PF(K_n)  = k\leq \floor{\log_2 n}$.

The fact that $\PF(K_n) \geq \floor{\log_2 n}$ follows from the coloring $\varphi_{\rF_1}$.
This finishes the proof of Theorem~\ref{theorem:P1F}.
\end{proof}

\section{Proof of Theorem \ref{theoremeasy}}\label{sec:T2} 
\label{Theorems}

Recall that we call an edge-coloring $\varphi$ \emph{combed} if all vertices are either ordered or unitary.

We prove Theorem~\ref{theoremeasy} by first showing the existence of an optimal edge-coloring
that is combed. Then we use Lemma~\ref{orderedPClemma} below which states that, for every inherited color class $M_t$,
either there exists $j$ such that at least half of  $v_1,\ldots,v_j$ is in $M_t$ or $M_t$ contains a unitary vertex.
This leads to a counting argument that finishes the proof of Theorem~\ref{theoremeasy}.

\begin{lem}\label{orderedPClemma}
	Let $\varphi:E(K_n) \to[k]$ be a combed coloring with inherited color classes $M_1,\ldots,M_k$.	 
	 If  the coloring $\varphi$ is $\rF_2$-polychromatic, or $\rHC$-polychromatic, then
	 $\forall t\in [k]$   $\exists  j\in[n-1]$ such that  $\abs{M_t(j)} \geq \frac{j}{2}$ or $M_t$ contains a unitary vertex.
\end{lem}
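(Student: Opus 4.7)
The plan is to argue by contradiction, building an explicit Hamiltonian cycle of $K_n$ that uses no edge of color $t$. Because every Hamiltonian cycle is a $2$-factor, this single construction simultaneously contradicts both the $\rF_2$- and the $\rHC$-polychromaticity of $\varphi$, so both halves of the hypothesis can be treated uniformly. The strategy parallels the proof of Lemma~\ref{orderedF1lemma}, with a Hamiltonian cycle playing the role of the perfect matching.

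Suppose for contradiction that there is a color $t \in [k]$ such that $M_t$ contains no unitary vertex and $\abs{M_t(j)} < j/2$ for every $j \in [n-1]$. List the vertices of $M_t$ in positional order as $x_1, \ldots, x_m$ and the remaining vertices as $y_1, \ldots, y_{n-m}$. Because $M_t$ has no unitary vertex, each $x_i$ is ordered with main color $t$, so every edge from $x_i$ to a vertex on its right has color $t$, while every edge from $x_i$ to a vertex on its left has some other color. Writing $p_i$ for the position of $x_i$, the hypothesis yields $i = \abs{M_t(p_i)} < p_i/2$, hence $p_i \geq 2i+1$, and in particular $y_1, \ldots, y_{i+1}$ all lie to the left of $x_i$.

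Using this, I would form the Hamiltonian cycle
\[
C = y_1 - x_1 - y_2 - x_2 - \cdots - y_m - x_m - y_{m+1} - \cdots - y_{n-m} - y_1,
\]
obtained by inserting each $x_i$ between $y_i$ and $y_{i+1}$ in the natural cyclic ordering of the $y$'s (with $y_1$ playing the role of $y_{m+1}$ in the degenerate case $n-m=m$, which can occur only when $n$ is even). For each edge of $C$ incident to some $x_i$, the other endpoint is a $y$ lying to the left of $x_i$; for each edge of $C$ with both endpoints in the $y$'s, neither endpoint lies in $M_t$. Thus in every case the edge's color is the main color of a non-$M_t$ vertex, which is not $t$, so $C$ avoids color $t$ entirely and we obtain the desired contradiction.

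The step I expect to require the most care is verifying that unitary vertices do not spoil this argument. From $\abs{M_t(1)}, \abs{M_t(2)} < 1$ we conclude $v_1, v_2 \notin M_t$, and by assumption no unitary vertex is in $M_t$ at all, so every unitary vertex of $\varphi$ is some $y$. For any edge of $C$ incident to a unitary $y$, the edge is either one of the $n-2$ ``main'' edges of that $y$ (colored with the $y$'s main color $\neq t$) or the single ``special'' edge of that $y$ to another unitary vertex (necessarily also a $y$, and colored with that partner's main color $\neq t$). Hence no edge of $C$ has color $t$, completing the proof.
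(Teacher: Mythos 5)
Your proof is correct and takes essentially the same route as the paper's: the same Hamiltonian cycle $y_1x_1y_2x_2\cdots y_mx_my_{m+1}\cdots y_{n-m}y_1$, the same positional count showing $y_i$ and $y_{i+1}$ lie to the left of $x_i$, and the same use of the combed hypothesis plus the absence of unitary vertices in $M_t$ to exclude color $t$ from every edge. (One passing sentence --- that \emph{every} edge from $x_i$ to a vertex on its left avoids color $t$ --- is not literally true for edges into $M_t$ itself, but your cycle never uses such edges and your final case analysis is the correct one, so nothing is affected.)
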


\begin{proof}
	Let $\sH \in \{\rF_2,\rHC\}$. 
	Let $\varphi$ be a combed $\sH$--polychromatic coloring with inherited color classes $M_1,\ldots,M_k$.
	It is sufficient to consider an arbitrary color $t \in [k]$ and show that the condition on $M_t$  is satisfied.
	
	Let $x_1, \ldots, x_m$ be the vertices of $M_t$ in order and let $y_1, \ldots, y_{n-m}$ be the other vertices of $K_n$ in order.
	Suppose for contradiction that there exists $t$ such that $\abs{M_t(j)} < \frac{j}{2}$ for all $j\in[n-1]$ and $M_t$ does not contain
	a unitary vertex.	
	Thus $\varphi$ is ordered at each $x_i \in M_t$ and so $y_{i+1}$ is to the left of  $x_i$ for each $i \in [m]$.
	Consider a Hamiltonian cycle $H=y_1 x_1 y_2 x_2 \cdots y_m x_m y_{m+1} \cdots y_{n-m} y_1$.  

Since $|M_t(j)| < j/2$ for all $j$,  the number of $y$'s that must precede $x_i$ is at least $i+1$ for each  $i=1, \ldots, m$.
Hence $y_i$ and $y_{i+1}$ are to the left of $x_i$ for each  $i=1, \ldots, m$.
Therefore each edge in $H$ incident with a vertex $x_i$ in $M_t$ goes to the left from the perspective of $x_i$.

Let $yx$ be an edge of $H$, where $x \in M_t$. 
Since $y \not\in M_t$, the majority color $r$ of $y$ is not $t$.
Since $\varphi$ is combed, either $\varphi(yx) = r$ or $\varphi(yx) \neq r$ and both $y$ and $x$ are unitary vertices.
Recall $M_t$ does not contain any unitary vertices. 
Hence no edge in $H$ is colored by $t$.
	This contradicts the fact that $\varphi$ is $\sH$-polychromatic.
\end{proof}

We say that a Hamiltonian cycle $H'$ is obtained  from a Hamiltonian cycle $H$  by a {\it twist} of disjoint  edges $e_1$ and $e_2$ of $H$ if $E(H)\setminus \{e_1, e_2\} \subseteq E(H')$,
i.e. we remove $e_1, e_2$ from $H$ and introduce two new edges to make the resulting graph a Hamiltonian cycle. Note that the choice of these two edges to add is unique.
The other choice of two edges to add does not preserve connectivity.
Without the connectivity requirement, the operation is known as a \emph{2-switch}.

Notice that any $2$-switch could be applied to a $2$-factor and the result will be again a $2$-factor.
Here, it might be possible to add the two new edges in two different ways.

For $\sH\in \{\rHC, \rF_2\}$, Lemma~\ref{structure} can be used to show that there exists an optimal $\sH$-polychromatic that is combed.

\begin{lem}\label{structure}
Suppose $n\geq 3$ and $X \subset V(K_n)$.
Let $\sH\in \{\rHC, \rF_2\}$ and $\varphi_1$ be an optimal $\sH$-polychromatic coloring of $K_n$ that is $X$-constant.
Then there exists an optimal $\sH$ polychromatic coloring $\varphi$ of $K_n$ that agrees with $\varphi_1$ on all edges with at least one endpoint in $X$ such that
\begin{enumerate}[label=(\Alph*)]
\item\label{caseA} there exists a vertex $v \in V(K_n)\setminus X$ such that $\varphi$ is $(X\cup\{v\})$-constant; or
\item\label{caseB} $X=\emptyset$ and there exist vertices $x,y,z$, such that 
these vertices are unitary under $\varphi$ of distinct main colors. 
This implies $\varphi$ is $\{x,y,z\}$-constant and $xyz$ is a rainbow triangle.
\end{enumerate}
\end{lem}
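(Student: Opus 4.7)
My plan is to imitate the switching argument used in the proof of Theorem~\ref{theorem:P1F}, adapting it to accommodate $2$-switches and twists, and to identify the extremal configuration in which the argument yields Case~\ref{caseB} rather than a contradiction. Among all optimal $\sH$-polychromatic colorings of $K_n$ that agree with $\varphi_1$ on every edge with an endpoint in $X$, I would first pick $\varphi$ maximizing the maximum monochromatic degree $d$ of $\varphi$ restricted to $G := K_n[V(K_n)\setminus X]$, and subject to that, maximizing the number of max-vertices of $G$. Fix a max-vertex $v \in V(K_n)\setminus X$ and, without loss of generality, let color $1$ attain the max degree at $v$ via edges $vy_1,\dots, vy_d$.

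If every edge from $v$ to $V(K_n)\setminus X$ is colored $1$, then $\varphi$ is $(X\cup\{v\})$-constant and Case~\ref{caseA} holds. Otherwise pick $u\in V(K_n)\setminus X$ with $\varphi(uv)=2\neq 1$. Exactly as in the proof of Theorem~\ref{theorem:P1F}, if every $F\in\sH$ through $uv$ has some other color-$2$ edge, then recoloring $uv$ from $2$ to $1$ preserves both $\sH$-polychromaticity and agreement with $\varphi_1$ on $X$-incident edges (since both endpoints of $uv$ lie outside $X$), while raising $v$'s color-$1$ degree to $d+1$ and contradicting the choice of $\varphi$. Hence some $F\in\sH$ has $uv$ as its \emph{unique} color-$2$ edge.

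For each $y_i\neq u$ I would then build $F_i\in\sH$ by a $2$-switch (when $\sH=\rF_2$) or a twist (when $\sH=\rHC$) that removes $uv$ and an $F$-edge $y_iw_i$ and inserts $vy_i$ and $uw_i$; for $\rHC$ the connectivity requirement uniquely singles out the ``forward'' $F$-neighbor of $y_i$ along the orientation from $u$ to $v$, while for $\rF_2$ either $F$-neighbor works. Since $F_i$ must contain a color-$2$ edge and its only new edges are $vy_i$ (color $1$) and $uw_i$, we conclude $\varphi(uw_i)=2$. Tracking the $w_i$'s along $F$ shows that all but at most one are pairwise distinct vertices of $V(K_n)\setminus X \setminus\{v\}$ (the exception being when $y_i$ is already an $F$-neighbor of $v$, in which case no switch is needed because $F$ already contains $vy_i$). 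Combined with the edge $uv$ itself, this forces $u$ to have color-$2$ degree at least $d$; by the extremal choice of $\varphi$, we may assume this degree is exactly $d$ and so $u$ is itself a max-vertex with main color $2$. Applying the same reasoning to a vertex $z$ adjacent to $v$ in a third color $3$ (which must exist unless $\varphi$ uses very few colors, a case handled directly) produces a third max-vertex $z$ with main color $3$, and tracing through the forced equalities shows that $v$, $u$, $z$ are unitary, pairwise connected by the three distinct main colors, so $uvz$ is a rainbow triangle. When $X=\emptyset$ this is precisely Case~\ref{caseB}; when $X\neq\emptyset$, the forced constancy at $v$ together with the existing $X$-constancy of $\varphi_1$ yields Case~\ref{caseA}.

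The principal obstacle is verifying the distinctness of the $w_i$'s for the Hamiltonian cycle twist, since its connectivity constraint restricts $w_i$ to be a single $F$-neighbor of $y_i$; a careful local analysis of $F$ at each $y_i$ (tracking which arc of $F$ lies between $y_i$ and $u$ versus between $y_i$ and $v$) is needed to show that these forward neighbors are pairwise distinct and never equal $v$ except in the single exceptional case. A secondary subtlety is the tight case where $u$'s color-$2$ degree exactly equals $d$: here the tie-breaker on the number of max-vertices, iterated through a third color, is what forces the three-unitary-vertex rainbow-triangle configuration of Case~\ref{caseB} rather than an outright contradiction.
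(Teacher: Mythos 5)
Your opening moves match the paper's: maximize the maximum monochromatic degree on $G=K_n[V(K_n)\setminus X]$, find $u$ with $\varphi(uv)=2$, extract an $H\in\sH$ in which $uv$ is the unique color-$2$ edge, and use twists/$2$-switches to force $u$ to be a max-vertex of color $2$. But there are two genuine gaps after that point. First, your route to Case~\ref{caseB} is internally inconsistent: the same switching argument (the paper's Claim~\ref{claim1}) also forces $v$ to be a $(1,2)$-max-vertex, i.e.\ \emph{every} edge from $v$ into $Z$ other than the $d$ color-$1$ edges has color $2$. Hence there is no vertex $z$ with $\varphi(vz)=3$, and your proposed iteration ``apply the same reasoning to a third color at $v$'' never gets off the ground. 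In the paper the rainbow triangle of Case~\ref{caseB} does not come from three colors meeting at one vertex; it comes from a cyclic chain of three \emph{different} max-vertices with minority colors $(1,2)$, $(2,3)$, $(3,1)$, and ruling that configuration in or out requires the chain of Claims~\ref{claim123}--\ref{only12}, which pin the minority colors down to a single pair $\{1,2\}$.

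Second, and more seriously, your sketch has no answer for the surviving case, which is the bulk of the paper's proof: all max-vertices are $(1,2)$- or $(2,1)$-max-vertices, $X\neq\emptyset$, and neither \ref{caseA} nor \ref{caseB} is in sight. Your remark that ``the forced constancy at $v$ \dots\ yields Case~\ref{caseA}'' when $X\neq\emptyset$ is false --- $v$ still has $\ell=m-1-d\geq 1$ edges of color $2$ inside $Z$, so $\varphi$ is not $(X\cup\{v\})$-constant. The paper disposes of this case by partitioning $Z$ into the set $S$ of $(1,2)$-max-vertices, the set $T$ of $(2,1)$-max-vertices, and the rest, showing that $H$ has no edge between $X$ and $Z$ (hence $\sH=\rF_2$), that $|S|=|T|$, and that the color-$1$ edges between $S$ and $T$ form an $\ell$-regular bipartite graph containing a $2$-factor; swapping that $2$-factor into $H$ destroys the unique color-$2$ edge and gives the contradiction (with a separate short argument for $\ell=1$). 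Some replacement for this argument is indispensable, and nothing in your proposal supplies it.
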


	\begin{proof}
		  Let $\sH\in\{\rF_2,\rHC\}$. 
		  Let $Z = V(K_n) \setminus X$ and $G$ be the subgraph of $K_n$ induced by $Z$.
		  Let $|Z| = m$. 
		  If $m\le 2$ then $X \neq \emptyset$ and \ref{caseA} is trivially satisfied.
		  Hence $m \geq 3$.
		  If $X=\emptyset$ and there exists an optimal $\sH$-polychromatic coloring $\varphi$ with three unitary vertices $x$, $y$, and $z$ of distinct main colors, then \ref{caseB} is satisfied. 
		  Hence we assume there is no such edge-coloring $\varphi$.
		  
		  Choose $\varphi$ to  be an optimal $\sH$-polychromatic coloring such that it agrees with $\varphi_1$ on edges with at least one endpoint in $X$ and subject to this, it maximizes the 
		  maximum monochromatic degree of $G$.
		  Define $d$ to be the maximum monochromatic degree of vertices in $G$ with $\varphi$.
		  
		    First suppose $d=m-1$. Let $v$ be a vertex of  maximum monochromatic degree $d$ in $G$. Then  $\varphi$ is  $(X\cup\{v\})$-constant and we have \ref{caseA}.
		    Hence we assume $d \leq m-2$.

Let $\ell= m-1-d$ and let $\varphi$ use colors $1, \ldots, k$.  
If color $a$ appears $d$ times in $G$ at a vertex $v \in Z$, we say $v$  is an \emph{$a$-max-vertex}.
If the $\ell$ edges incident with $v$ in $G$ which do not have color $a$ all have color $b$, we call $v$ an \emph{$(a,b)$-max-vertex} with \emph{minority color} $b$.

		\begin{claim}\label{claim1}
			If $a, b\in [k] $ are two distinct colors, $v \in Z$ is an $a$-max-vertex and $\varphi(vu)=b$ for some other vertex $u \in Z$, then all of the following hold:
\begin{enumerate}[label=($\arabic*$)]
\item\label{c1.1}  $u$ is a $b$-max-vertex,
\item\label{c1.2} $v$ is an $(a,b)$-max-vertex,
\item\label{c1.3}  either $X = \emptyset$ or $\sH = \rF_2$, and
\item\label{c1.4}  at least half of the edges between $X$ and $Z$ have color $b$.
\end{enumerate}
\end{claim}
\begin{proof}
	For ease of notation, we assume that $a=1$ and $b=2$.  Let $v$ be a $1$-max-vertex.
	Let $u \in Z$ be a vertex such that $\varphi(v u)=2$. 
	If every $H \in \sH$ containing $uv$ contains another edge of color $2$, we could change the color of $uv$ to color $1$, giving an $\sH$-polychromatic coloring where $v$ has monochromatic degree $d+1$, a contradiction. 
	 Hence, there must be  $H \in \sH$ where $uv$ is the only edge of color $2$.
	 
	 Cyclically orient the edges of each cycle in $H$ such that $uv$ is an arc, and denote the resulting directed graph $\vv{H}$.
	  Let $\varphi(vy_j)=1$,  for $y_j \in Z$,  $j=1,2,\ldots,d$.  For each $j\in[d]$, let $w_j$ be the predecessor of $y_j$ in $\vv{H}$, so $\vv{w_j y_j}\in \vv{H}$ for each $j$.  We assume $w_j\neq v$ for $j=2,3,\ldots,d$, but perhaps $w_1=v$ and perhaps $w_j=y_i$ for some $i\neq j$.
	  See Figure~\ref{fig:cl1}.

Now we shall prove \ref{c1.1}.		
If $w_j\neq v$, twist the edges $uv$ and $w_j y_j$ of $H$ to get a new $H_j \in \sH$ containing $vy_j$ and $uw_j$.
Since $H_j$ must have an edge of color $2$ and $\varphi(vy_j)=1$,  we must have  $\varphi(u w_j)=2$.  
Hence, $\varphi(uv)=\varphi(uw_2)=\varphi(uw_3)=\cdots=\varphi(uw_d)=2$.
Note that $w_j\in Z$ for each $j\in[d]$. 
This is because if $w_j\in X$, then, since $\varphi$ is $X$-constant and $y_j\in Z$, $\varphi(w_j y_j)=\varphi(w_j u)=2$, so $w_j y_j$ is another edge in $H$ with color $2$, a contradiction.  
That gives us $d$ edges of color $2$ at $u$ in $G$. 
Note that if $w_1\neq v$, then $uw_1$ is another edge of color $2$ incident to $u$, so $w_1=v$ and $\vv{vy_1}$ is an arc of $\vv{H}$.
Therefore, $u$ is a $2$-max-vertex. This proves \ref{c1.1}.

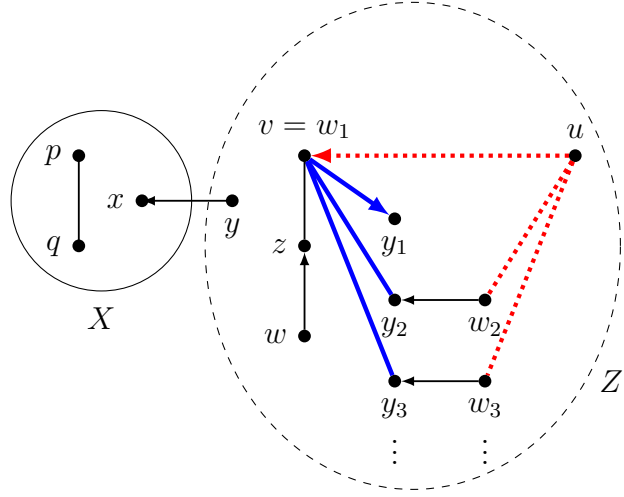
\begin{figure}
\begin{center}
\begin{tikzpicture}[scale=1.2]
\draw(-0.25,0) ellipse (1 and 1)
(-0.25,-1.3) node {$X$};
\draw[edge3to]
(1.2,0) node[vtx,label=below:$y$]{} -- ++(-1,0) node[vtx,label=left:$x$]{}
;
\draw[edge3]
(-0.5,-0.5) node[vtx,label=left:$q$]{} -- (-0.5,0.5) node[vtx,label=left:$p$]{}
;
\draw
(2,0.5) node[vtx,label=above:{$v=w_1$}](v){}
(5,0.5) node[vtx,label=above:{$u$}](u){}
\foreach \y in {1,2,3}{
(3,0.7-0.9*\y) node[vtx,label=below:{$y_\y$}](y\y){}
}
\foreach \w in {2,3}{
(4,0.7-0.9*\w) node[vtx,label=below:{$w_\w$}](w\w){}
}
(3,-2.7) node {$\vdots$}
(4,-2.7) node {$\vdots$}
(v) ++(0,-1) node[vtx,label=left:{$z$}](z){}
     ++(0,-1) node[vtx,label=left:{$w$}](w){}
;
\draw[edge2to] (u)--(v);
\draw[edge1to] (v)--(y1);
\draw[edge1] (v)--(y2) (v)--(y3);
\draw[edge2] (u)--(w2) (u)--(w3);
\draw[edge3to] (w2)--(y2);
\draw[edge3to] (w3)--(y3);
\draw[edge3to] (w)--(z);
\draw[edge3] (z)--(v);
\draw[dashed]
(3.2,-0.5) ellipse (2.3cm and 2.7cm)
(5.4,-2) node{$Z$}
;
\end{tikzpicture}
\end{center}
\caption{Situation in Claim~\ref{claim1}.}\label{fig:cl1}
\end{figure}

Next, we prove \ref{c1.2}, i.e., that $v$ is a $(1,2)$-max-vertex.
Let $z \in Z$ be a vertex distinct from $u$ such that $\varphi(vz)\neq 1$.  
Let $w$ be the vertex such that $\vv{wz}$ is an arc in $\vv{H}$. 
We know that $w\not\in\{v=w_1,w_2,\ldots,w_d, u\}$, since $z \not\in \{y_1,\ldots,y_d, v\}$.
Let $H_z \in \sH$, containing $vz$ and $uw$, be obtained from $H$ by twisting  $uv$ and $wz$.
Since $uv$ was the unique edge of $H$ colored by $2$, either $\varphi(u w)= 2$ or $\varphi(v z)=2$.
Suppose $w \in Z$. Since the maximum monochromatic degree is $d$ and  $\varphi(u w_j)=2$ for all $j\in[d]$, $\varphi(u w)\neq 2$, so $\varphi(v z)=2$.
Suppose $w \in X$. Since $\varphi(w z) \neq 2$ and $\varphi$ is $X$-constant, $\varphi(w z)=\varphi(u w)\neq 2$, so $\varphi(v z)=2$.
In both cases,  $\varphi(v z)=2$.
Therefore, $v$ is a $(1,2)$-max-vertex and the proof of \ref{c1.2} is done.

If $X=\emptyset$ then both \ref{c1.3} and \ref{c1.4} hold. So, assume that $X\neq \emptyset$.
Let $H\in \sH$.
Assume that there in an edge of $H$ with one endpoint in $X$ and another in $Z$. Then there exist $x \in X$ and $y \in Z$ such that $\vv{yx}$ is an arc in $\vv{H}$.
We know $y\not\in\{v=w_1,\ldots,w_d,u\}$, because the successor of $y$ in $\vv{H}$ is in $X$.
If we twist $yx$ and $uv$ we get $H_x \in \sH$
containing $uy$ and $vx$, where one of these edges must have color $2$.
However, since $\varphi(xv)=\varphi(xy)\neq 2$, we must have $\varphi(yu)=2$, and $u$ has monochromatic degree $d+1$ in $G$, a contradiction. Hence there is no edge in $\vv{H}$ with one endpoint in $X$ and another in $Z$, and thus $X$ induces a $2$-factor in $H$. In particular, since $Z\neq \emptyset$, $H$ is not a Hamiltonian cycle, and $\sH=\rF_2$.
Let $p,q\in X$ with $pq \in E(H)$.
Since both $(H\setminus\{uv,pq\})\cup\{pv,qu\}$ and $(H\setminus\{uv,pq\})\cup\{pu,qv\}$ are $2$-factors in $\mathcal{H}$, and $\varphi$ is $X$-constant, either $\varphi(pv)=\varphi(pu)=2$ or $\varphi(qv)=\varphi(qu)=2$.
In fact, since $\varphi$ is $X$-constant, for each edge $pq$ of $H$, where $p,q\in X$, all the edges from either $p$ or $q$ into $Z$ have color $2$. Since $H[X]$ is a union of cycles, at least half the edges between $X$ and $Z$ have color $2$.
This proves~\ref{c1.3} and \ref{c1.4} and finishes the proof of Claim~\ref{claim1}.
\end{proof}

\begin{claim}\label{claim123}
The graph $G$ does not contain a $(1,2)$-max-vertex,  a $(2,3)$-max-vertex, and a $(3,1)$-max-vertex at the same time.
\end{claim}
\begin{proof}
Let $x$, $y$, and $z$ be a $(1,2)$-max-vertex, a $(2,3)$-max-vertex, and a $(3,1)$-max-vertex, respectively.
Applying Claim~\ref{claim1} to $\{v,u\} = \{x,y\}$, then $\{v,u\}= \{y,z\}$, and then with $\{v,u\}=\{z,x\}$, the conclusion~\ref{c1.4} gives that at least half of the edges between $X$ and $Z$ have color $2$, $3$, and $1$, respectively.
Since colors $1$, $2$, and $3$ are distinct, we conclude $X = \emptyset$.
Let $H \in \sH$. 
Observe  that $x$, $y$, and $z$ could be incident only with edges of $H$ with colors in $\{1,2,3\}$ in $\varphi$, so all other colors in $H$ come from edges not incident with these vertices.

Let $\varphi^\star$ be obtained from $\varphi$ by the following modification
\[
c^\star(uv) = \begin{cases}
1 & u = x \text{ and } v \neq y,\\
2 & u = y \text{ and } v \neq z,\\
3 & u = z \text{ and } v \neq x,\\
\varphi(uv) & \text{otherwise}.
\end{cases}
\]
Observe that the union of edges of $H$ with at least one endpoint in $\{x,y,z\}$
contains all colors $\{1,2,3\}$ in $\varphi^\star$. 
Hence $H$ is polychromatic in $\varphi^\star$ and $\varphi^\star$ is $\sH$-polychromatic.
Moreover, $\varphi^\star$ is $\{x,y,z\}$-constant and all the other properties of \ref{caseB} hold, which is a contradiction. 
This finishes the proof of Claim~\ref{claim123}.
\end{proof}

\begin{claim}\label{12to21}
If $v$ is a $(1,2)$-max-vertex and $u \in Z$ such that $\varphi(uv)=2$, then $u$ is a $(2,1)$-max-vertex.
\end{claim}
\begin{proof}
Let $v$ be a $(1,2)$-max-vertex and $u \in Z$ such that $\varphi(uv)=2$.
Claim~\ref{claim1} implies that $u$ is a $(2,\star)$-max-vertex.
Suppose for contradiction that $u$ is a $(2,3)$-max-vertex.
Since the number of edges  incident to $v$ colored $2$ 
is the same as  the number of edges  incident to $u$ colored $3$
and $\varphi(uv)=2$, there is a vertex $x$ such that $\varphi(ux) = 3$ and $\varphi(vx) = 1$.
Again by Claim~\ref{claim1}, $x$ is a $(3,1)$-max-vertex, contradicting Claim~\ref{claim123}.
\end{proof}

\begin{claim}\label{only12}
If there is a $(1,2)$-max-vertex, then there is no $(1,b)$-max-vertex for any $b \neq 2$.
\end{claim}
\begin{proof}
By symmetry suppose for contradiction that $v \in Z$ is a $(1,2)$-max-vertex and $u \in Z$ is a $(1,3)$-max-vertex.
Let $x\in Z$ be a vertex with $\varphi(vx) = 2$.
By Claim~\ref{12to21}, $x$ is a $(2,1)$-max-vertex.
Notice $\varphi(ux) \in \{1,2\} \cap \{1,3\} = \{ 1\}$.
Now Claim~\ref{12to21} applied to $x$ and $u$ gives that $u$ is a $(1,2)$-max-vertex, which is a contradiction.
\end{proof}

Claims~\ref{12to21} and \ref{only12} imply that if there is an $(a,b)$-max-vertex, then $\{a,b\}=\{1,2\}$.

Let $S$ be the set of all $(1,2)$-max-vertices, $T$ be the set of all $(2,1)$-max-vertices, and $W=Z\setminus(S\cup T)$.  
By Claim~\ref{12to21}, both $S$ and $T$ are not empty.
Edges within $S$ and from $S$ to $W$ must have  color $1$ (because any minority color edge at a max-vertex is incident to a max-vertex of that color), edges within $T$ and from $T$ to $W$ must have color $2$, and each edge between $S$ and $T$ must have color $1$ or $2$. 

Suppose $X = \emptyset$. Let $s \in S$ and $t \in T$.
Let $\varphi^\star$ be obtained from $\varphi$ by recoloring all edges incident to $s$ to $1$
and by recoloring all edges incident to $t$ and not incident to $s$ to $2$.
Notice that $\varphi^\star$ is $\sH$-polychromatic. 
This contradicts  the maximality of the monochromatic degree in $\varphi$.
Therefore, $X \neq \emptyset$.

By symmetry, we can assume $|S| \leq |T|$.
Let $v \in S$ and $u \in T$ be such that $\varphi(vu) = 2$.
By the maximality of the monochromatic degree of $v$ in $Z$, there exists $H \in  \sH$, where $uv$ is the unique edge colored by $2$.
Recall $\ell= m-1-d$.
Let $s_1,\ldots,s_\ell \in S$, where $\varphi(us_i ) = 1$ for all $ i \in [\ell]$.
Let  $\vv{H}$ be  directed cycle(s) obtained by orienting edges of $H$ such that $\vv{uv} \in \vv{H}$.
Let $t_i$ be a vertex such that $\vv{s_it_i} \in \vv{H}$ for all  $i \in [\ell]$.
See Figure~\ref{fig:12}.

\begin{figure}
\begin{center}
\begin{tikzpicture}[scale=1.2]
\draw(0,0) ellipse (0.4 and 1)
(0,0.5) node[vtx](a){}
(0,-0.5) node[vtx](b){}
(0,-1.3) node {$X$};
\foreach \z in {-20,0,20}{
  \draw[edge1] (a) -- ++ (\z:0.7);
  \draw[edge2] (b) -- ++ (\z:0.7);
}
\draw[edge1to]
(0.2,0) node[vtx,label=left:$x$]{} -- ++(1,0) node[vtx,label=below:$z$]{}
;

\begin{scope}[xshift=2cm]
\draw(0,0) ellipse (0.5 and 1.2)
(0.1,0.5) node[vtx,label=left:$v$](v){}
(0.1,0) node[vtx,label=left:$s_1$](s1){}
(0.1,-0.5) node[vtx,label=left:$s_2$](s2){}
(0,-1.5) node {$S$}
;
\draw[edge1]  (-0.2,-0.8) node[vtx]{} -- ++(0.4,0) node[vtx]{} ;
\end{scope}

\begin{scope}[xshift=4cm]
\draw(0,0) ellipse (0.5 and 1.2)
(-0.1,0.5) node[vtx,label=right:{$u=t_1$}](u){}
(-0.1,0) node[vtx,label=right:$t_2$](t2){}
(0,-1.5) node {$T$}
;
\draw[edge2]  (-0.2,-0.8) node[vtx]{} -- ++(0.4,0) node[vtx]{} ;
\end{scope}

\draw[edge2to] (u)--(v);
\draw[edge1to] (s1)--(u);
\draw[edge1to]  (s2)--(t2);
\draw[edge1] (u) -- (s2);
\draw[edge2] (v) -- (t2);

\begin{scope}[xshift=3cm,yshift=-2cm]
\draw(0,0) ellipse (0.4 and 0.4)
(0,0) node[vtx](w){}
(0.7,0) node {$W$}
;
\foreach \z in {0,10,20}{
  \draw[edge1] (w) -- ++(110+\z:0.7);
  \draw[edge2] (w) -- ++ (70-\z:0.7);
}
\end{scope}

\draw[dashed]
(3,-0.5) ellipse (2.3cm and 2.3cm)
(5.2,-2) node{$Z$}
;
\end{tikzpicture}
\end{center}
\caption{Final part of proof of Lemma~\ref{structure} with only $(1,2)$- and $(2,1)$-max-vertices, solid edges correspond to color $1$,  dotted edges correspond to color $2$.}\label{fig:12}
\end{figure}
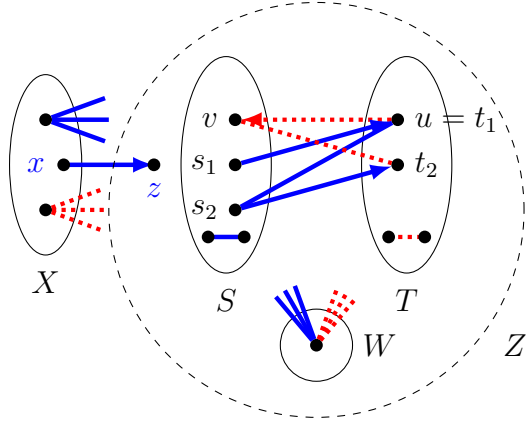

If there is  an $i \in [\ell]$ such that $\varphi(vt_i) \neq 2$, then twist of of $vu$ and $s_it_i$ 
contains $vt_i$ and $us_i$ and leaves no edge colored $2$, which is a contradiction with $\varphi$ being $\sH$-polychromatic. 
Hence $\varphi(vt_i) = 2$ and $t_i \in T$ for all $i \in [\ell]$.
Notice $v$ has exactly $\ell$ incident edges colored $2$ and the other ends of these edges must be  $t_1,\ldots,t_\ell$.
By symmetry, we assume $t_1 = u$.

Suppose there exists $\vv{xz}$ in $\vv{H}$ with $x \in X$ and $z \in Z$.
Since $uv$ is the unique edge of $\vv{H}$ colored $2$, $\varphi(xz)$ is not $2$ and
since $\varphi$ is $X$-constant, $\varphi(xz) = \varphi(xu) \neq 2$.
Notice that $z \not\in \{u=t_1,\ldots,t_\ell\}$ since for every $i \in [\ell]$, the predecessor 
of $t_i$ in $\vv{H}$ is $s_i$ and $s_i \in S$. 
Hence $\varphi(vz) = 1$ and the twist of $xz$ and $uv$ contains $xu$ and $vz$.
Since $\varphi(xu)\neq 2$ and $\varphi(vz)\neq 2$, we get a contradiction to $\varphi$ being $\sH$-polychromatic. 
Therefore, there is no edge of $H$ between $X$ and $Z$. 

Since there is no edge of $H$ between $X$ and $Z$ and $X \neq \emptyset$, $H$ is not connected.
Therefore, $\sH = \rF_2$.

Recall that all edges between $T$ and $W$ have color $2$, hence they are not in $H$.
Since there are no edges of $H$ between $X$ and $Z$, and all  edges within $T$ have color $2$,  every vertex in $T$ has both neighbors from $H$ in $S$. 
On the other hand, every vertex in $S$ has at most two neighbors from $H$ in $T$. Thus $|S|\geq |T|$.
Recall we assumed $|S| \leq |T|$.  
Hence $|S| = |T|$ and there are no edges of $H$ between $S \cup T$ and $W$.

Consider a bipartite graph $B$ with vertex set $S \cup T$, edges $st$, $s \in S$, $t\in T$, and $\varphi(st) = 1$.
Since vertices in $T$ are $(2,1)$-max-vertices, each of them has degree exactly $\ell$ in $B$.
Similarly, since vertices in $S$ are $(1,2)$-max-vertices, each of them is not adjacent to exactly
$\ell$ vertices of $T$. Therefore, all vertices in $S$ have the same degree in $B$.
Since $|S| = |T|$, we conclude $B$ is an $\ell$-regular graph.

If $\ell \geq 2$ then there exists a $2$-factor $K$ in $B$.
Let $H^\star$ be obtained from $H$ by removing edges incident to vertices in $S \cup T$ and
adding $K$. 
Since all edges of $K$ have color $1$ and $uv$ was the unique edge of $H$ colored $2$,
we conclude $H^\star$ has no edge colored $2$, contradicting the assumption that $\varphi$ is $\sH$-polychromatic.

Finally, if $\ell = 1$, then $B$ is a matching on $4$ vertices and the other two edges
between $S$ and $T$ must have color $2$.
Hence $S \cup T$ does not contain a $2$-factor in which $uv$ would be the unique edge colored $2$.
This contradicts the existence of $H$.

This finishes the proof of Lemma~\ref{structure}.		
\end{proof}

\begin{proof}[Proof of Theorem \ref{theoremeasy}]
Let $\sH \in \{\rF_2,\rHC\}$. 
Let $\varphi_1$ be an optimal $\sH$-polychromatic coloring of $E(K_n)$ with $k$ colors and $[k]$ be the set of colors.
We choose $X= \emptyset$, then we repeatedly apply Lemma~\ref{structure}.  
In the first application, we may get Lemma~\ref{structure}(B) and get $X= \{x,y,z\}$ that are unitary of distinct colors or Lemma~\ref{structure}(A) and $|X|=1$. 
But after that Lemma~\ref{structure}(A) always applies.
Note that there are no unitary vertices except possibly $x,y$, and $z$ because each other vertex is incident to distinct colors $c_x, c_y, c_z$ that are main colors of $x,y$, and $z$.
This results in a combed edge-coloring $\varphi$ with zero or three first unitary vertices and all others being ordered vertices.

Let $M_i$ be the inherited color classes obtained from $\varphi$.
Let $M_1,\ldots,M_{k-3}$ be the  inherited color classes not containing $x, y$, or $z$.
By Lemma~\ref{orderedPClemma}, for each color class $M_t$ there is $j_t$ such that $|M_t(j_t)| \geq \frac{j_t}{2}$.
By symmetry, assume $j_i < j_t$ for all $1 \leq i < t \leq k-3$.
This leads to
\[
	\abs{M_t(j_t)}\geq \sum_{i<t}\abs{M_i(j_t)} \geq  \sum_{i<t}\abs{M_i(j_i)} 
\]
for $t=2,\ldots,k-3$ and $|M_1| \geq 1$.
Hence by induction we get
\[
\abs{M_t(j_t)}\geq  1 +  \sum_{2 \leq i < t}\abs{M_i(j_i)} \geq 1 +  \sum_{2 \leq i < t} 2^{i-2} = 2^{t-2}.
\]
Therefore, $|M_t| \geq 2^{t-2}$ for $t \geq 2$ and 
\[
n \geq \sum_{1 \leq t\leq k-3}\abs{M_t} \geq  1 + \sum_{2 \leq t\leq k-3} 2^{t-2}  \geq 2^{k-4}.
\]
Hence $k \leq \log_2n + 4$. 
By splitting the cases to (A) and (B), we could show $k \leq \log_2n + 2$.

The lower bounds in Theorem~\ref{theoremeasy} follow from colorings  $\varphi_{\rF_2}$ and
$\varphi_{\rHC}$.
Since every Hamiltonian cycle is also a $2$-factor, we obtain  $\P2F(K_n)   \leq \PHC(K_n)$.
\end{proof}

\section{Closing Remarks}
\label{sec:conjectures_and_closing_remarks}

We show above that $c$ from Theorem~\ref{theoremeasy} is at most 4.
It is possible to get a more precise version of Lemma~\ref{orderedPClemma}
and use it to get sharp bounds in Theorem~\ref{theoremeasy}. 
We do not provide the details in order to keep the paper short and less technical.
Details should be in the follow-up paper~\cite{followup} together with 
generalizations which allow $\sH$ to be the family of all $1$-regular or $2$-regular graphs that
span all but a fixed number of vertices.

\section{Acknowledgements}
This research was initiated at a workshop made possible by the Alliance for Building
Faculty Diversity in the Mathematical Sciences (DMS 0946431).

\bibliographystyle{abbrv}
\bibliography{Poly1HC2}
\end{document}